\numberwithin{equation}{section}
\numberwithin{table}{section}
\theoremstyle{plain}
\newtheorem{theorem}{Theorem}[section]
\newtheorem*{thm:A}{Theorem \ref{thm:A}}
\newtheorem*{thm:B}{Theorem \ref{thm:B}}
\newtheorem{lemma}{Lemma}[section]
\theoremstyle{remark}
\newtheorem*{remark}{Remark}
\newcommand{\BB}{{\mathcal{B}}}
\newcommand{\CC}{{\mathcal{C}}}
\newcommand{\Zp}{{\mathbb{Z}_{p}}}
\newcommand{\Qp}{{\mathbb{Q}_{p}}}
\newcommand{\Fp}{{\mathbb{F}_{p}}}
\newcommand{\ZZ}{{\mathbb{Z}}}
\newcommand{\QQ}{{\mathbb{Q}}}
\newcommand{\FF}{{\mathbb{F}}}
\newcommand{\allone}{{\mathbf{1}}}
\DeclareMathOperator{\im}{Im}
\DeclareMathOperator{\coker}{coker}
\title[Critical group of a Moore(57, 2)]{On the Critical Group of the missing Moore graph.}
\author[Ducey]{Joshua E. Ducey}
\address{Dept.\ of Mathematics and Statistics, James Madison University, Harrisonburg, VA 22807}
\email{duceyje@jmu.edu}
\keywords{invariant factors, elementary divisors, Smith normal form, critical group, Jacobian group, sandpile group, adjacency matrix, Laplacian, chip-firing, Moore graph}
\subjclass[2010]{05C50}
\begin{document}
\begin{abstract}
We consider the critical group of a hypothetical Moore graph of diameter $2$ and valency $57$.  Determining this group is equivalent to finding the Smith normal form of the Laplacian matrix of such a graph.  We show that all of the Sylow $p$-subgroups of the critical group must be elementary abelian with the exception of $p = 5$.  We prove that the $5$-rank of the Laplacian matrix determines the critical group up to two possibilities.

\end{abstract}
\maketitle
\section{Introduction}
Consider a simple graph with diameter $d$ and girth $2d+1$.  Such a graph is necessarily regular, and is known as a \textit{Moore graph}.  Another characterization: Moore graphs are the regular graphs of diameter $d$ and valency $k$ that achieve the upper bound on number of vertices
\[
1+\sum_{i = 1}^{d} k \cdot (k-1)^{i-1}.
\]
We will denote such a graph of diameter $d$ and valency $k$ as a Moore$(k,d)$.
It was shown in \cite{hs} that for Moore graphs of diameter $2$, one must have the valency $k \in \{2, 3, 7, 57\}$.  The $5$-cycle, the Petersen graph, and the Hoffman-Singleton graph are the unique graphs satisfying the first three respective degrees.  Neither the existence nor uniqueness of a Moore graph of diameter $2$ and valency $57$ have yet been established.  

There has been some work on determining algebraic properties of such a graph, especially regarding its automorphism group \cite{aschbacher, macaj-siran}.  It is known that a Moore$(57,2)$ possesses very few automorphisms, if any at all.  For a more recent result on the enumeration of independent sets in such a graph, see \cite[Theorem 5.1]{alexander-mink}.  

In this paper we investigate the structure of the \textit{critical group} of a Moore($57,2$).  We define this abelian group formally in the next section, but we mention here that it is an important graph invariant that has been widely studied and goes by many names in the literature (sandpile group, Jacobian group, Picard group).  The group comes from the Laplacian matrix of the graph and has order equal to the number of spanning trees of the graph.  The critical group can also be understood in terms of a certain ``chip-firing'' game on the vertices of the graph \cite{biggs}, \cite[Chap. 14]{godsil-royle}.

In Section \ref{sec:prelims} we give formal definitions, and describe the relation between the critical group and the Laplacian matrix of a graph.  Our main results are Theorems \ref{thm:A} and \ref{thm:B}, which together show that the $5$-rank of the Laplacian matrix of a Moore$(57,2)$ determines the critical group to within two possibilities.  We state these theorems immediately below for the interested reader.  They will be proved in Section \ref{sec:proofs}.  The critical group of a graph $\Gamma$ is denoted $K(\Gamma)$.  Let $Syl_{p}(K(\Gamma))$ denote the Sylow $p$-subgroup of the critical group.

\vspace{\baselineskip}

\begin{thm:A}
Let $\Gamma$ denote a Moore$(57,2)$ graph.  Then for some nonnegative integers $e_{1}, e_{2}, e_{3}$ we have
\[
K(\Gamma) \cong \left( \ZZ / 2\ZZ\right)^{1728} \oplus \left( \ZZ / 13\ZZ\right)^{1519} \oplus \left(\ZZ / 5 \ZZ\right)^{e_{1}} \oplus \left(\ZZ / 5^{2} \ZZ\right)^{e_{2}} \oplus \left(\ZZ / 5^{3} \ZZ\right)^{e_{3}}.
\]
\end{thm:A}

\vspace{\baselineskip}

\begin{thm:B}
Let $\Gamma$ be a Moore$(57,2)$ graph.  Let $e_{0}$ denote the rank of the Laplacian matrix of $\Gamma$ over a field of characteristic $5$.  Then either
\[
Syl_{5}(K(\Gamma)) \cong \left(\ZZ / 5 \ZZ\right)^{1520-e_{0}} \oplus \left(\ZZ / 5^{2} \ZZ\right)^{1732-e_{0}} \oplus \left(\ZZ / 5^{3} \ZZ\right)^{e_{0}-3}
\]
or
\[
Syl_{5}(K(\Gamma)) \cong \left(\ZZ / 5 \ZZ\right)^{1521-e_{0}} \oplus \left(\ZZ / 5^{2} \ZZ\right)^{1730-e_{0}} \oplus \left(\ZZ / 5^{3} \ZZ\right)^{e_{0}-2}.
\]
\end{thm:B}

\vspace{\baselineskip}

\section{Preliminaries}\label{sec:prelims}
Let $\Gamma$ be a simple graph with some fixed ordering of the vertex set $V(\Gamma)$.  Then the \textit{adjacency matrix} of $\Gamma$ is a square matrix $A = (a_{i,j})$ with rows and columns indexed by $V(\Gamma)$, where
\[
a_{i,j} = \begin{cases}
1, &\mbox{ if vertex $i$ is adjacent to vertex $j$,}\\
0, &\mbox{ otherwise}.
\end{cases}
\]
Let $D = (d_{i,j})$ be a matrix of the same dimensions as $A$ with
\[
d_{i,j} = \begin{cases}
\mbox{the degree of vertex $i$,} &\mbox{ if $i = j$,}\\
0, &\mbox{ otherwise}.
\end{cases}
\]
Finally, set $L = D - A$.  The matrix $L$ is called the \textit{Laplacian matrix} of the graph $\Gamma$, and will be our primary focus.  

Let $\ZZ^{V(\Gamma)}$ denote the free abelian group on the vertex set of $\Gamma$.  Then the Laplacian $L$ can be understood as describing a homomorphism:
\[
L \colon \ZZ^{V(\Gamma)} \to \ZZ^{V(\Gamma)}.
\]
We will usually use the same symbol for both the matrix and the map.  The cokernel of $L$, 
\[
\coker L = \ZZ^{V(\Gamma)} / \im(L),
\]
 always has free rank equal to the number of connected components of $\Gamma$.  The torsion subgroup of $\coker L$ is known as the \textit{critical group} of $\Gamma$, and is denoted $K(\Gamma)$.  It is an interesting fact that for a connected graph $\Gamma$, the order of $K(\Gamma)$ is equal to the number of spanning trees of $\Gamma$.  See \cite{biggs} or \cite{dino1} for proofs of these basic facts and more information.  One way to compute the critical group of a graph is by finding the \textit{Smith normal form} of $L$.  

Recall that if $M$ is any $m \times n$ integer matrix then one can find square, unimodular (i.e., unit determinant) matrices $P$ and $Q$ so that $PMQ = S$, where the matrix $S=(s_{i,j})$ satisfies:
\begin{enumerate}
\item $s_{i,i}$ divides $s_{i+1,i+1}$ for $1 \leq i < \min \{m,n\}$
\item $s_{i,j} = 0$ for $i \neq j$.
\end{enumerate}
Then $S$ is known as the Smith normal form of $M$, and it is not hard to see that
\[
\coker M \cong \ZZ/s_{1,1}\ZZ \oplus \ZZ/s_{2,2}\ZZ \oplus \cdots
\]
This particular decomposition of $\coker M$ is the invariant factor decomposition, and the integers $s_{i,i}$ are known as the \textit{invariant factors} of $M$.  The prime power factors of the invariant factors of $M$ are known as the \textit{elementary divisors} of $M$.

The concept of Smith normal form generalizes nicely when one replaces the integers with any principal ideal domain (PID), as is well known (see, for example, \cite[Chap. 12]{d-f}).  In what follows $J$ and $I$ will be used to denote the all-ones matrix and the identity matrix, respectively, of the correct sizes. 

\section{The Critical group of a Moore($57, 2$)} \label{sec:proofs}
Throughout the rest of the paper we let $\Gamma$ denote a Moore($57, 2$) graph.  It follows easily from the definitions that $\Gamma$ is strongly regular with parameters
\[
v = 3250,
k = 57,
\lambda = 0,
\mu = 1
\]
and so the adjacency matrix $A$ must satisfy
\[
A^2 = 57I + 0A + 1(J-A-I)
\]
or
\begin{equation}\label{eqn:A}
A^2 = 56I - A + J.
\end{equation}
From this equation one can deduce \cite[Chap. 9]{b-h} that $A$ has eigenvalues $7, -8, 57$ with respective multiplicities $1729, 1520, 1$.  The degree $57$ has eigenvector the all-one vector $\allone$; the other eigenvalues are the \textit{restricted} eigenvalues.  

Since the graph is regular, we immediately get the Laplacian spectrum:  eigenvalues $50, 65, 0$ with multiplicities as above.  Kirchhoff's Matrix-Tree Theorem \cite[Prop. 1.3.4]{b-h} tells us that the number of spanning trees of $\Gamma$ is the product of the non-zero eigenvalues, divided by the number of vertices.  We thus get the order of the critical group of $\Gamma$:
\begin{align*}
|K(\Gamma)| &= \frac{1}{3250} \cdot 50^{1729} \cdot 65^{1520}\\
&= 2^{1728} \cdot 5^{4975} \cdot 13^{1519}.
\end{align*}

We remark that the number of such abelian groups is quite large.  The next theorem begins to narrow things down.  Let $Syl_{p}(K(\Gamma))$ denote the Sylow $p$-subgroup of the critical group.

\begin{theorem}\label{thm:A}
Let $\Gamma$ denote a Moore$(57,2)$ graph.  Then for some nonnegative integers $e_{1}, e_{2}, e_{3}$ we have
\[
K(\Gamma) \cong \left( \ZZ / 2\ZZ\right)^{1728} \oplus \left( \ZZ / 13\ZZ\right)^{1519} \oplus \left(\ZZ / 5 \ZZ\right)^{e_{1}} \oplus \left(\ZZ / 5^{2} \ZZ\right)^{e_{2}} \oplus \left(\ZZ / 5^{3} \ZZ\right)^{e_{3}}.
\]
\end{theorem}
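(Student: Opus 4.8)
The plan is to avoid any prime-by-prime rank computation and instead bound the exponent of $K(\Gamma)$ in one stroke from the quadratic relation satisfied by $L$, then read off the group from the order $|K(\Gamma)| = 2^{1728}\cdot 5^{4975}\cdot 13^{1519}$ computed above. Since $\Gamma$ is $57$-regular, $L = 57I - A$; substituting $A = 57I - L$ into \eqref{eqn:A} and simplifying gives the single integer matrix identity
\[
L^{2} - 115 L + 3250 I = J .
\]
The two features that make this useful are that $J = \allone\,\allone^{T}$ kills the orthogonal complement of $\allone$, and that $3250 = 2\cdot 5^{3}\cdot 13$ is exactly the product of prime powers we want to see as an exponent.

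First I would realize $K(\Gamma)$ as a quotient of the sublattice $W = \{\,x \in \ZZ^{V(\Gamma)} : \allone^{T} x = 0\,\}$. As $L$ is symmetric with $\allone^{T} L = 0$, we have $\im(L)\subseteq W$; moreover $\ZZ^{V(\Gamma)}/W \cong \ZZ$ is torsion-free, so $W/\im(L)$ is exactly the torsion subgroup of $\coker L$, i.e.\ $K(\Gamma) = W/\im(L)$, while the complementary free rank $1$ accounts for the single infinite cyclic summand.

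The key step is to show $3250\, W \subseteq \im(L)$. Applying the identity to an arbitrary $w\in W$ and using $Jw = \allone(\allone^{T}w) = 0$ yields
\[
3250\, w = 115\, Lw - L^{2}w = L\bigl(115\, w - Lw\bigr)\in\im(L),
\]
because $115\, w - Lw$ is again an integer vector. Hence $K(\Gamma)$ is annihilated by $3250$, so its exponent divides $2\cdot 5^{3}\cdot 13$.

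Finally I would combine this exponent bound with the known order. The Sylow $2$-subgroup has exponent dividing $2$, so it is elementary abelian, and having order $2^{1728}$ it is forced to be $(\ZZ/2\ZZ)^{1728}$; the same reasoning gives $Syl_{13}(K(\Gamma)) \cong (\ZZ/13\ZZ)^{1519}$. The Sylow $5$-subgroup has exponent dividing $5^{3}$, so each of its cyclic factors is $\ZZ/5\ZZ$, $\ZZ/5^{2}\ZZ$, or $\ZZ/5^{3}\ZZ$, which is precisely the claimed decomposition (with $e_{1}+2e_{2}+3e_{3} = 4975$ recording the $5$-part of the order). I do not anticipate a real obstacle here: the only points needing care are the algebra of the substitution into \eqref{eqn:A} and the verification that $W/\im(L)$ genuinely is the torsion subgroup. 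Determining the individual exponents $e_{1},e_{2},e_{3}$ is the genuinely hard part, but that is the content of Theorem \ref{thm:B} and requires the characteristic-$5$ rank $e_{0}$, which plays no role in this statement.
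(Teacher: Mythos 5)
Your proposal is correct and follows essentially the same route as the paper: both substitute $A = 57I - L$ into \eqref{eqn:A} to get $L^{2} - 115L + 3250I = J$ and exploit the vanishing of $J$ on the zero-sum sublattice to conclude that the exponent of $K(\Gamma)$ divides $2\cdot 5^{3}\cdot 13$, then combine with the order $2^{1728}\cdot 5^{4975}\cdot 13^{1519}$. Your direct verification that $3250\,W \subseteq \im(L)$ is a slightly cleaner packaging of the paper's argument, which instead reads the divisibility off the invariant factors of $L$ restricted to $Y = \ker J$ and passes to a quotient.
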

\begin{proof} \hfil \\
Substituting $A = 57I - L$ into equation \ref{eqn:A}, we get
\begin{align}
(57I - L)^{2} &= 56I - (57I - L) + J \nonumber \\
L^{2} - 115L &= -3250I + J \nonumber \\
(L - 115I)L &= -(2\cdot 5^{3} \cdot 13)I + J. \label{eqn:L}
\end{align}
This last equation tells us much about the Smith normal form of $L$.  As in the previous section, we view $L$ as defining a homomorphism of free $\ZZ$-modules
\[
L \colon \ZZ^{V(\Gamma)} \to \ZZ^{V(\Gamma)}.
\]
Define a subgroup of $\ZZ^{V(\Gamma)}$:
\[
Y = \left\{ \sum_{v \in V(\Gamma)} a_{v}v \, \,\Big\vert \, \sum_{v \in V(\Gamma)} a_{v} = 0 \right\}.
\]
Note that $Y$ is the smallest direct summand of $\ZZ^{V(\Gamma)}$ that contains $\im L$ (i.e., it is the \textit{purification} of $\im L$).  Changing the codomain of $L$ to $Y$ does not affect the nonzero invariant factors of $L$, so we do.  In fact, with this adjustment we have $\coker L \cong K(\Gamma)$.

If we also restrict the domain of $L$ to $Y$ the Smith normal form will probably be altered.  However, note that $\coker L$ is a quotient of $\coker L|_{Y}$. 

As $Y = \ker J$, from equation \ref{eqn:L} we get
\begin{equation}\label{eqn:LY}
(L - 115I)|_{Y}L|_{Y} = -(2\cdot 5^{3} \cdot 13)I.
\end{equation}
Take any pair of integer bases for $Y$ which put the matrix for $L|_{Y}$ into Smith normal form.  Follow a basis element $x$ through the composition of maps on the left side of equation \ref{eqn:LY}; we can see that the image is $-(2 \cdot 5^{3} \cdot 13)x$.  Hence the invariant factor of $L|_{Y}$ associated to the basis element $x$ must divide $2 \cdot 5^{3} \cdot 13$.  Said another way, the elementary divisors of $L|_{Y}$ can only be from among $\{2, 13, 5, 5^{2}, 5^{3}\}$, and so $\coker L|_{Y}$ has a cyclic decomposition of the form in the statement of the theorem.  The same must be true for its quotient $K(\Gamma)$.
\end{proof}

\begin{remark}
A \textit{bicycle} of $\Gamma$ is a subgraph for which every vertex has even degree and whose edges form an edge-cutset of $\Gamma$ (i.e., the deletion of the edges in the subgraph results in $\Gamma$ becoming disconnected).  The set of all bicycles of $\Gamma$ form a binary vector space with operation symmetric difference of edges.  The dimension of this vector space is equal to the number of invariant factors of $L$ that are even \cite[Lem. 14.15.3]{godsil-royle}.  Thus we have shown that $\Gamma$ has $2^{1728}$ bicycles--the maximum possible for the order of its critical group.
\end{remark}

In the next theorem we will flesh out a relationship between the integers $e_1, e_2, e_3$ and the $5$-rank of $L$, which we denote by $e_{0}$.  As $Syl_{5}(K(\Gamma))$ is the mystery here, it will be convenient to ignore all other primes than $5$.   We now briefly explain how to do this.

For a prime integer $p$, let $\Zp$ denote the ring of $p$-adic integers.  The ring $\Zp$ is a PID, so Smith normal form still makes sense for matrices with entries from $\Zp$; this of course encompasses all integer matrices.  When we view an integer matrix as having entries from the ring $\Zp$, the elementary divisors that survive the change of viewpoint are the powers of $p$.  The elementary divisor multiplicities can then be understood in terms of certain $\Zp$-modules attached to the matrix or map under consideration.

Let $\eta \colon \Zp^{n} \to \Zp^{m}$ be a homomorphism of free $\Zp$-modules of finite rank.  We get a descending chain of submodules of the domain
\[
\Zp^{n} = M_{0} \supseteq M_{1} \supseteq M_{2} \supseteq \cdots
\]
by defining 
\[
M_{i} = \left\{ x \in \Zp^{n} \, \vert \, \eta(x) \in p^{i}\Zp^{m} \right\}.
\]
That is, $M_{i}$ consists of the domain elements whose images under $\eta$ are divisible by $p^{i}$.

In a similar way, we can define
\[
N_{i} = \left\{ p^{-i} \eta(x) \, \vert \, x \in M_{i} \right\}.
\]
This gives us an ascending chain of modules in the codomain
\[
N_{0} \subseteq N_{1} \subseteq N_{2} \subseteq \cdots
\]
that will eventually stabilize to the purification of $\im \eta$ in $\Zp^{m}$.  For a submodule $R$ of the free $\Zp$-module $\Zp^{\ell}$, we define 
\[
\overline{R} = \left(R + p\Zp^{\ell}\right) / p\Zp^{\ell}.
\]
Note that $\overline{R}$ is a vector space over the finite field $\Fp = \Zp / p\Zp$.  We denote the field of fractions of $\Zp$ by $\Qp$.

\begin{lemma} \label{lem:dims}
Let $\eta \colon \Zp^{n} \to \Zp^{m}$ be a homomorphism of free $\Zp$-modules of finite rank.  Let $e_{i}$ denote the multiplicity of $p^{i}$ as an elementary divisor of $\eta$.  Then, for $i \geq 0$,
\[
\dim_{\Fp} \overline{M_{i}} = \dim_{\Fp} \overline{\ker(\eta)} + e_{i} + e_{i+1} + \cdots
\]
and
\[
\dim_{\Fp} \overline{N_{i}} = e_{0} + e_{1} + \cdots + e_{i}.
\]
\end{lemma}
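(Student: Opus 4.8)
The plan is to reduce everything to an explicit computation carried out in a pair of bases realizing the Smith normal form of $\eta$ over the PID $\Zp$. First I would invoke the existence of $\Zp$-bases $x_1, \dots, x_n$ of the domain and $y_1, \dots, y_m$ of the codomain such that $\eta(x_j) = p^{a_j} y_j$ for $1 \le j \le r$, where $r = \dim_{\Qp}\im\eta$ is the $\Qp$-rank, and $\eta(x_j) = 0$ for $r < j \le n$. The exponents $a_1, \dots, a_r \ge 0$ are precisely the elementary divisor exponents, so $e_i = \#\{\, j : a_j = i \,\}$. In these coordinates the kernel is the free summand spanned by $x_{r+1}, \dots, x_n$; being a direct summand of $\Zp^n$, it reduces mod $p$ to an $(n-r)$-dimensional space, so $\dim_{\Fp}\overline{\ker\eta} = n - r$.

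Next I would express $M_i$ and $N_i$ in these coordinates. Writing $x = \sum_j c_j x_j$, the membership $\eta(x) \in p^i\Zp^m$ translates into $v_p(c_j) \ge i - a_j$ for each $j \le r$, with no condition on $c_{r+1}, \dots, c_n$; hence $M_i = \bigoplus_{j \le r} p^{\max(0,\, i - a_j)}\Zp\, x_j \oplus \bigoplus_{j > r} \Zp\, x_j$. Dividing the image by $p^i$ shifts each valuation constraint, giving $N_i = \bigoplus_{j \le r} p^{\max(0,\, a_j - i)}\Zp\, y_j$. Both modules are direct sums of cyclic pieces of the shape $p^t\Zp\, z$ for a basis vector $z$, and such a piece survives the passage $R \mapsto \overline{R} = (R + p\Zp^{\ell})/p\Zp^{\ell}$ as a one-dimensional $\Fp$-space exactly when $t = 0$, and reduces to zero when $t \ge 1$.

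From here the two dimension counts fall out. For $\overline{M_i}$ the surviving directions are all $j > r$ (contributing $n - r$) together with those $j \le r$ satisfying $\max(0, i - a_j) = 0$, i.e. $a_j \ge i$; the latter number is $e_i + e_{i+1} + \cdots$, which combined with $\dim_{\Fp}\overline{\ker\eta} = n - r$ yields the first formula. For $\overline{N_i}$ the surviving directions are those $j \le r$ with $a_j \le i$, numbering $e_0 + e_1 + \cdots + e_i$, giving the second formula. As a consistency check, $M_0 = \Zp^n$ and $N_0 = \im\eta$ recover $\dim_{\Fp}\overline{M_0} = n$ and $\dim_{\Fp}\overline{N_0} = e_0$, the latter being the mod-$p$ rank of $\eta$.

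The only point requiring genuine care rather than bookkeeping is the handling of $N_i$: I must confirm that $p^{-i}\eta(x)$ actually lies in $\Zp^m$ for $x \in M_i$ (it does, by the defining divisibility of $M_i$), and then track how the factor $p^{-i}$ converts the constraint $v_p(c_j) \ge \max(0, i - a_j)$ into $v_p(d_j) \ge \max(0, a_j - i)$ for the coefficients $d_j = c_j p^{a_j - i}$ of $p^{-i}\eta(x)$. Everything else is a direct reading-off of dimensions from a diagonalized map, so I do not anticipate any serious obstacle beyond keeping the valuation inequalities straight.
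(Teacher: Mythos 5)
Your proof is correct and follows essentially the same route as the paper: pass to bases realizing the Smith normal form, identify $M_{i}$ and $N_{i}$ as direct sums of cyclic pieces $p^{t}\Zp z$ indexed by the basis, and count which pieces survive reduction mod $p$. Your version is merely more explicit about the valuation bookkeeping that the paper summarizes with ``a little thought reveals.''
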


\begin{proof} \hfil \\
Take a basis $\BB$ of the domain and a basis $\CC$ of the codomain for which the matrix of $\eta$ is in Smith normal form.  For $i \geq 0$, define the subset of $\BB$
\[
B_{i} = \{ x \in \BB \, \vert \, p^{i} \mbox{ divides } \eta(x), \mbox{ but } p^{i+1} \nmid \eta(x)\}.
\]
Then the basis $\BB$ is partitioned by the sets $\{B_{i}\}$ along with 
\[
D = \{ x \in \BB \, \vert \, \eta(x) = 0\}.
\]
In other words, we split $\BB$ up so that basis elements associated to the same invariant factor are grouped together.  Note that $B_{i}$ has cardinality $e_{i}$ and $D$ is a basis for $\ker(\eta)$.  A little thought reveals that a basis for $M_{i}$ is given by the set
\[
D \, \cup \, p^{i}B_{0} \, \cup \, p^{i-1}B_{1} \, \cup \cdots \cup \, pB_{i-1} \, \cup \, \left(\bigcup_{k \geq i} B_{k}\right).
\]
The nonzero elements of the $\Fp$-reduction of this set yields a basis of $\overline{M_{i}}$, and the first part of the lemma is proved.  By considering a similar partition of $\CC$ the second part of the lemma becomes clear as well.
\end{proof}

\begin{theorem}\label{thm:B}
Let $\Gamma$ be a Moore$(57,2)$ graph.  Let $e_{0}$ denote the rank of the Laplacian matrix of $\Gamma$ over a field of characteristic $5$.  Then either
\[
Syl_{5}(K(\Gamma)) \cong \left(\ZZ / 5 \ZZ\right)^{1520-e_{0}} \oplus \left(\ZZ / 5^{2} \ZZ\right)^{1732-e_{0}} \oplus \left(\ZZ / 5^{3} \ZZ\right)^{e_{0}-3}
\]
or
\[
Syl_{5}(K(\Gamma)) \cong \left(\ZZ / 5 \ZZ\right)^{1521-e_{0}} \oplus \left(\ZZ / 5^{2} \ZZ\right)^{1730-e_{0}} \oplus \left(\ZZ / 5^{3} \ZZ\right)^{e_{0}-2}.
\]
\end{theorem}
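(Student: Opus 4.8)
The plan is to localize at the prime $p = 5$ and to extract from equation (\ref{eqn:L}) linear relations among $e_1, e_2, e_3$. Two of these come almost for free. Comparing the $5$-part of the order computed above, $5^{4975}$, with the form in Theorem \ref{thm:A} gives the \emph{order relation} $e_1 + 2e_2 + 3e_3 = 4975$. For the second, I would apply Lemma \ref{lem:dims} to $L\colon \Zp^{V(\Gamma)} \to \Zp^{V(\Gamma)}$. Here $\ker L = \langle \allone\rangle$ with $\allone \notin 5\,\Zp^{V(\Gamma)}$, so $\dim_{\Fp}\overline{\ker L} = 1$; taking $i = 1$ identifies $\overline{M_1}$ with $\ker \overline L$, of dimension $3250 - e_0$ by rank--nullity, and the lemma then yields the \emph{rank relation} $e_1 + e_2 + e_3 = 3249 - e_0$. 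Subtracting, $e_2 + 2e_3 = 1726 + e_0$, so one further relation would finish the problem. Notice that the two groups in the statement differ precisely by $(e_1,e_2,e_3)\mapsto(e_1+1,\,e_2-2,\,e_3+1)$, a vector annihilated by both relations above; the whole difficulty is thus a single undetermined unit.

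The third relation has to come from a second-order reading of (\ref{eqn:L}). Reducing it modulo $5$ gives $\overline L^{\,2} = \overline J$, so $\overline L$ is nilpotent with $\overline L^{\,2}$ of rank one and $\ker\overline L \subseteq \ker \overline J = \overline Y$. Restricted to $Y$ the equation is clean, $(L-50I)(L-65I) = 0$ by (\ref{eqn:LY}), so $L$ turns $Y\otimes\Zp$ into a module over $R = \Zp[t]/\bigl((t-50)(t-65)\bigr)$. Because $50\equiv 65\equiv 0\pmod 5$ while $v_5(50) = 2$ and $v_5(65) = 1$, this $R$ is a local ring with normalization $\Zp\times\Zp$ — an arithmetic ``node'' — whose only indecomposable lattices are the two eigen-branches and $R$ itself. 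Hence $Y\otimes\Zp \cong R^{a}\oplus\tilde{R}_1^{\,b}\oplus\tilde{R}_2^{\,c}$, where the known multiplicities force $a+b = 1729$ and $a+c = 1520$. Computing the cokernel of $L$ on each summand (the branches contribute single copies of $\Zp/5^2\Zp$ and $\Zp/5\Zp$, while each $R$ contributes $\Zp/5^3\Zp$ together with a unit) pins down the Sylow $5$-subgroup of $\coker(L|_Y)$ in terms of the lone parameter $a$; equivalently, it evaluates $\dim_{\Fp}\overline{M_3} = 1 + e_3$.

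The main obstacle is transferring this back to the full map, i.e. handling the all-ones direction. The critical group is $\coker L$ with codomain $Y$, which differs from $\coker(L|_Y)$ by quotienting out the cyclic subgroup generated by the image under $L$ of a vector completing $Y$ to $\Zp^{V(\Gamma)}$; the $J$-term in (\ref{eqn:L}) couples the kernel direction $\allone$ into this step. Concretely, feeding $x\in M_1$ (so $Lx = 5u$) through (\ref{eqn:L}) forces $5 \mid \sum_v x_v$ and yields $Lu = 115u - 650x + t\,\allone$ with $t = \tfrac15\sum_v x_v$, whence $\overline L\,\bar u = \bar t\,\overline{\allone}$. Thus the second- and third-order divisibility conditions detect the $5$-adic valuation of the coordinate sum of $x$, and iterating (\ref{eqn:L}) controls $\dim_{\Fp}\overline{M_3}$ only up to one unit. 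This leaves exactly two admissible values, $e_3 = e_0 - 3$ and $e_3 = e_0 - 2$; substituting each into the order and rank relations and solving the resulting $2\times 2$ system produces the two groups displayed in the statement. Deciding between them would require a finer invariant — for instance, whether the $5$-rank of $L$ strictly exceeds that of $L|_Y$ — which I do not expect the single identity (\ref{eqn:L}) to determine on its own.
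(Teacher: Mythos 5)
Your first two relations (the order relation $e_1+2e_2+3e_3=4975$ and the rank relation $e_1+e_2+e_3=3249-e_0$) are correct, and your observation that the two displayed groups differ by a vector annihilated by both correctly diagnoses where the real content must lie. But your route to that content is genuinely different from the paper's, and it is not completed. The paper never decomposes $Y\otimes\ZZ_5$ as a module over $\ZZ_5[t]/\bigl((t-50)(t-65)\bigr)$; it applies Lemma \ref{lem:dims} to the two lattices $V_{65}\cap\ZZ_5^{V(\Gamma)}\subseteq N_1$ and $V_{50}\cap\ZZ_5^{V(\Gamma)}\subseteq M_2$ to get $1520\le e_0+e_1$ and $1729\le 1+e_2+e_3$; since the left sides sum to $3249$ and the right sides to $3250$, exactly one inequality is off, by exactly one, and that dichotomy is the entire source of the two cases. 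Your proposal contains no analogue of this step.

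The genuine gap is your final paragraph. Granting the (asserted, unproved, but true) classification of lattices over the nodal order $R$, you do get $Syl_5(\coker L|_Y)\cong(\ZZ/5^3)^a\oplus(\ZZ/5^2)^b\oplus(\ZZ/5)^c$ with $a+b=1729$, $a+c=1520$. But the passage back to $K(\Gamma)$ is waved at rather than executed: the sentence ``iterating (\ref{eqn:L}) controls $\dim_{\FF_5}\overline{M_3}$ only up to one unit; this leaves exactly two admissible values'' is an assertion, not an argument --- nothing you wrote produces the specific values $e_0-3$ and $e_0-2$, nor shows there are only two. Worse, your own machinery, if supplied with proofs, over-determines the answer rather than leaving an ambiguity: $K(\Gamma)$ is the quotient of $\coker(L|_Y)$ by the cyclic subgroup generated by the class of $Lw$ for any $w$ with $\ZZ^{V(\Gamma)}=Y\oplus\ZZ w$, and a determinant count forces that class to have order exactly $2\cdot5^3\cdot13$. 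Quotienting $(\ZZ/5^3)^a\oplus(\ZZ/5^2)^b\oplus(\ZZ/5)^c$ by a cyclic subgroup whose generator has $5$-part of order exactly $5^3$ always yields $(\ZZ/5^3)^{a-1}\oplus(\ZZ/5^2)^b\oplus(\ZZ/5)^c$, independent of the generator. That pins down a single case (the second one), so the two-fold ambiguity you report is evidence that you have not actually carried the computation through. To make this a proof you must either finish the lattice-theoretic route honestly --- proving the decomposition of $Y\otimes\ZZ_5$ into copies of $R$ and its two branches, and then doing the quotient step, which would yield a statement strictly stronger than the theorem --- or abandon it for the paper's elementary inequality argument.
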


\begin{proof} \hfil \\
We view the Laplacian matrix $L$ of $\Gamma$ as a matrix over $\ZZ_{5}$.  For $\lambda$ an eigenvalue of $L$, let $V_{\lambda}$ denote the $\QQ_{5}$-eigenspace for $\lambda$.  One sees that $V_{65} \cap \ZZ_{5}^{V(\Gamma)} \subseteq N_{1}$, and so $\overline{V_{65} \cap \ZZ_{5}^{V(\Gamma)}} \subseteq \overline{N_{1}}$.  Since $V_{65} \cap \ZZ_{5}^{V(\Gamma)}$ is a direct summand of $\ZZ_{5}^{V(\Gamma)}$ (being the kernel of the endomorphism $L-65I$ of the $\ZZ_{5}$-lattice $\ZZ_{5}^{V(\Gamma)}$) with rank equal to the dimension of $V_{65}$ over $\QQ_{5}$, we have that $\dim_{\QQ_{5}} V_{65} = \dim_{\FF_{5}} \overline{V_{65} \cap \ZZ_{5}^{V(\Gamma)}}$.  Applying Lemma \ref{lem:dims},
\begin{align}
1520 &= \dim_{\QQ_{5}} V_{65} \label{ineq:1} \\
&= \dim_{\FF_{5}} \overline{V_{65} \cap \ZZ_{5}^{V(\Gamma)}} \nonumber \\
&\leq \dim_{\FF_{5}} \overline{N_{1}} \nonumber\\
&= e_{0} + e_{1}.\nonumber
\end{align}
By a similar argument, $V_{50} \cap \ZZ_{5}^{V(\Gamma)} \subseteq M_{2}$ and Lemma \ref{lem:dims} implies that 
\begin{align}
1729 &= \dim_{\QQ_{5}} V_{50} \label{ineq:2} \\
&= \dim_{\FF_{5}} \overline{V_{50} \cap \ZZ_{5}^{V(\Gamma)}} \nonumber \\
&\leq \dim_{\FF_{5}} \overline{M_{2}} \nonumber\\
&= 1 + e_{2} + e_{3}.\nonumber
\end{align}
Note that $\ker L$ is spanned by the all-one vector $\allone$, which explains the $1$ appearing in the right hand side of the above inequality.

Now consider carefully these two inequalities \ref{ineq:1} and \ref{ineq:2}:
\begin{align*}
1520 &\leq e_{0} + e_{1}\\
1729 &\leq 1 + e_{2} + e_{3}.
\end{align*}
The sum of the left hand sides is $1520+1729=3249$, while the sum of the right hand sides is $e_{0}+e_{1}+e_{2}+e_{3}+1=3250$.  There are exactly two ways in which this can be:\\

\begin{enumerate} 
\item[\textbf{Case 1:}]  $\quad 1520 = e_{0} + e_{1} \mbox{ and } 1729 = e_{2} + e_{3}$. \label{A}\\
\item[\textbf{Case 2:}]  $\quad 1521 = e_{0} + e_{1} \mbox{ and } 1728 = e_{2} + e_{3}$.
\end{enumerate}
\vspace{.5cm}
There is another equation that applies to all cases.  Since 
\[ 
|Syl_{5}(K(\Gamma))| = 5^{4975},
\]
 we have
\begin{equation} \label{val}
4975 = e_{1} + 2e_{2} + 3e_{3}.
\end{equation}

Taking equation \ref{val} with the two equations of Case 1, we are seeking nonnegative integer solutions to the system
\begin{align*}
e_{0} + e_{1}  &= 1520 \\
e_{2} + e_{3} &= 1729\\
e_{1} + 2e_{2} + 3e_{3} &= 4975.
\end{align*}
This is easily done by hand.  Choosing, say, $e_{3}$ to be free we get:
\begin{itemize}
\item $e_{3} = t$
\item $e_{2} = 1729 - t$
\item $e_{1} = 1517 - t$
\item $e_{0} = 3 + t$.
\end{itemize}
Writing each unknown in terms of the $5$-rank $e_{0}$ instead gives us the first isomorphism in the statement of the theorem.

In Case 2, the system becomes
\begin{align*}
e_{0} + e_{1}  &= 1521 \\
e_{2} + e_{3} &= 1728\\
e_{1} + 2e_{2} + 3e_{3} &= 4975.
\end{align*}
The solutions may be written 
\begin{itemize}
\item $e_{3} = t$
\item $e_{2} = 1728 - t$
\item $e_{1} = 1519 - t$
\item $e_{0} = 2 + t$.
\end{itemize}
If we instead take $e_{0}$ to be free we get multiplicities as in the second isomorphism of the theorem.

\end{proof}

\begin{remark}
The author has thus far been unable to obtain strong bounds on the possible $5$-rank of $L$.  The ambitious reader is directed to \cite{b-ve}; there the authors compute the relevant $p$-ranks of the Petersen graph and the Hoffman-Singleton graph.  Knowledge of specific adjacencies and constructions within the graphs are used.
\end{remark}

\section{Acknowledgements}
The author thanks an anonymous referee for helpful comments.  This work was supported by James Madison University's Tickle Fund.

\end{document}